\documentclass[12pt]{amsart}

\usepackage{amssymb,amsmath}
\usepackage{amssymb,latexsym}
\usepackage{amsfonts}
\usepackage{amssymb}
\usepackage{stmaryrd}
\usepackage{longtable}
\usepackage{amsmath, geometry, amssymb}
\usepackage{chngcntr}
\counterwithin{table}{section}
\numberwithin{equation}{section}
\geometry{a4paper} 
\pagestyle{plain}
\newtheorem{theorem}{Theorem}[section]
\newtheorem{lemma}{Lemma}[section]
\newtheorem{coro}{Corollary}[section]

\newtheorem{example}{Example}[section]

\newcommand{\sqr}[2]{{\vcenter{\vbox{\hrule height#2pt
                \hbox{\vrule width#2pt height#1pt \kern#1pt
                \vrule width#2pt}\hrule height#2pt}}}}

\newcommand{\beq}{\begin{equation}}
\newcommand{\eeq}{\end{equation}}
\newcommand{\beqar}{\begin{eqnarray}}
\newcommand{\eeqar}{\end{eqnarray}}
\def\beqars{\begin{eqnarray*}}
\def\eeqars{\end{eqnarray*}}

\def \ds{\displaystyle}

\newcommand{\nn}{\mathbb{N}}
\newcommand{\zz}{\mathbb{Z}}
\newcommand{\qq}{\mathbb{Q}}
\newcommand{\cc}{\mathbb{C}}


\newcommand{\DD}{\Delta}

\makeatletter
\def\imod#1{\allowbreak\mkern5mu({\operator@font mod}\,\,#1)}
\makeatother

\allowdisplaybreaks

\pagestyle{myheadings}


\begin{document}

\title{Explicit Evaluation of Double Gauss Sums}
\author{\c{S}aban Alaca and Greg Doyle}

\maketitle

\markboth{\c{S}ABAN ALACA AND GREG DOYLE}{EXPLICIT EVALUATION OF DOUBLE GAUSS SUMS}

\begin{abstract}
We present an explicit evaluation of the double Gauss sum
$$ G(a,b,c;S;p^n):=\ds\sum_{x,y=0}^{p^n-1} e^{2\pi i S(ax^2+bxy+cy^2)/p^n},$$
where  $a, b, c$ are integers such that $\gcd(a,b,c)=1$,  $p$ is a prime, $n$ is a positive integer,  and $S$ is an integer coprime to $p$.

\vspace{2mm}

\noindent
Key words and phrases: Gauss sums; double Gauss sums; exponential sums; binary quadratic forms; quadratic exponential sums.

\vspace{2mm}

\noindent
2010 Mathematics Subject Classification: 11L03, 11L05, 11T23, 11E16, 11E25, 11D79.
\end{abstract}

\section{Introduction}  

We let $\nn$ denote the set of positive integers, $\nn_0$ the non-negative integers, $\zz$ the integers, $\qq$ the rational numbers and $\cc$ the complex numbers.
Let $a,b,c,S \in \zz$, $n \in \nn$ and let $p$ be a prime.
For convenience we set $\ds e(\alpha) := e^{2 \pi i \alpha}$ for any $\alpha \in \qq$.

The (quadratic) Gauss sum $\ds G(S;p^n)$ is defined by
    \begin{align}
        G(S;p^n) := \sum_{x=0}^{p^n-1} e\left(\frac{Sx^2}{p^n}\right).
    \end{align}
The evaluation of $G(S;p^n)$ is well known and was first determined by Gauss \cite{Gauss}.
In this paper we evaluate a similar sum with binary quadratic form argument.

We define the double (quadratic) Gauss sum $G(a,b,c;S;p^n)$ by
    \begin{align}
        G(a,b,c;S;p^n) := \sum_{x,y=0}^{p^n-1} e\left(\frac{S(ax^2+bxy+cy^2)}{p^n}\right).
    \end{align}

If $p^n \mid S$ then we have
\begin{align}
G(S;p^n)= \sum_{x=0}^{p^n-1} 1 =p^n \quad \text{ and }\quad G(a,b,c;S;p^n)= \sum_{x,y=0}^{p^n-1} 1 =p^{2n}.
\end{align}
If $p^n \nmid S$ then  $p^m || S$ for some $m \in \nn_0$ with $m <n$, so that   $S_1 = S/p^m \in \zz$, and
\begin{align}
G(S;p^n) &=  \sum_{x=0}^{p^n -1} e\left(\frac{Sx^2}{p^n}\right)
=  \sum_{x=0}^{p^n -1} e\left(\frac{S_1 x^2}{p^{n-m}}\right)    \\
&=  p^m \sum_{x=0}^{p^{n-m} -1} e\left(\frac{S_1 x^2}{p^{n-m}}\right) \nonumber
=p^m G(S_1;p^{n-m})
\end{align}
and 
\begin{align}
G(a,b,c;S;p^n) &= \sum_{x,y=0}^{p^n-1} e\left(\frac{S(ax^2+bxy+cy^2)}{p^n}\right)
 = \sum_{x,y=0}^{p^n-1} e\left(\frac{S_1(ax^2+bxy+cy^2)}{p^{n-m}}\right)  \nonumber \\
&= p^{2m} \sum_{x,y=0}^{p^{n-m}-1} e\left(\frac{S_1(ax^2+bxy+cy^2)}{p^{n-m}}\right)  \\
&=p^{2m}G(a,b,c;S_1;p^{n-m}).  \nonumber
\end{align}
Thus
we  may assume that $S$ is coprime to $p$ and  $\gcd(a,b,c)=1$.
We write $(a,b,c)$ to denote   $\gcd(a,b,c)$.
The sum $G(a,b,c;S;p^n)$  was first evaluated by Weber \cite{Weber} for $b$ even, and subsequently refined by Jordan \cite{Jordan} around $1870$. Alaca, Alaca and Williams \cite{AAW} evaluated the sum $G(a,b,c;S;p^n)$ given the condition $\ds 4ac-b^2 \neq 0$.
In each of these papers, the main idea is to use a linear change of variables to diagonalize the binary quadratic form
    \begin{align}
        Q := Q(x,y) = ax^2+bxy+cy^2.
    \end{align}
In this fashion, if $Q \sim Q' = Ax^2+Cy^2$ for some integers $A$ and $C$, then
    \begin{align}
        G(Q;S;p^n) = G(Q';S;p^n) = G(AS;p^n) \cdot G(CS;p^n).
    \end{align}

Our approach is similar to those earlier ideas, but our choice for change of variables will generalize those results in an explicit fashion.
Our approach differs from the approach of Weber  \cite{Weber} and Jordan  \cite{Jordan}, who diagonalized $Q$ recursively.
We  first evaluate $G(a,b,c;S;p^n)$ for $p$ an odd prime, and subsequently $G(a,b,c;S;2^n)$.
It should be noted that references to the papers \cite{Weber} and \cite{Jordan} are rare.
As those papers were written in German and French, respectively, this paper may also serve as a modern translation of those ideas.

We may assume that $a, b \neq 0$, and permuting coefficients if necessary, we may also assume that
if $p^m \mid \mid a$ for some $m \in \nn$, then $p^m \mid c$.
We observe that
\begin{align}
G(a,b,c;S;p^n) = G(\overline{a}, \overline{b}, \overline{c}; S; p^n),
\end{align}
where $\ds \overline{a}$,  $\ds \overline{b}$ and  $\ds \overline{c}$ denote the residue classes of $a$, $b$ and $c$ modulo $p^n$, respectively. Hence, we may
identify $a, b$ and $c$ with their positive integer residues modulo $p^n$. 
We write $a \equiv p^{\alpha}A \imod {p^n}$ and $b \equiv p^{\beta}B \imod {p^n}$, where
$\alpha, \beta \in \nn_0$ and  $A,B\in \zz$ satisfy $p \nmid AB$. 
Thus, we may assume that $a, b$ and $c$ are of the form
    \begin{align}
        (a,b,c)=1,~a\equiv p^{\alpha}A \imod {p^n},~b \equiv p^{\beta}B \imod {p^n},~c \equiv 0 \imod {p^{\alpha}},
    \end{align}
where $A, B \in \zz$ satisfy  $p \nmid AB$, and $\alpha , \beta \in \nn_0$. We note that (1.9) implies that at least one of $\alpha, \beta$ is zero. We use the
inequality $\alpha \leq \beta$ to indicate that $\alpha = 0$, and similarly we use $\beta \leq \alpha$ to indicate that $\beta =0$. Finally, the discriminant of the
binary quadratic form $Q$ in (1.6) is defined by $\DD := 4ac-b^2$.

\section{Preliminary Results for Gauss Sums} 

For the remainder of the paper, we let $\ds \left(\frac Sp\right)$ denote the Jacobi-Kronecker-Legendre symbol.
The following theorem is the famous deep formula first given by Gauss \cite{Gauss}.
One can consult the excellent monograph by Berndt, Evans and Williams \cite[pp. 18-28]{BEW} for an elementary proof.

\begin{theorem}  
    Let $k \in \nn$. For $S \in \zz$ coprime to $k$, we have
        \begin{align*}
            G(S;k):=  \sum_{x=0}^{k-1} e\left(\frac{Sx^2}{k}\right) =
    \begin{cases}
                \ds \left(\frac Sk\right)\sqrt{k} &\mbox{if } k \equiv 1 \imod 4\\
                \ds ~0 &\mbox{if } k \equiv 2 \imod 4\\
                \ds \left(\frac Sk\right)\sqrt{-k} &\mbox{if } k \equiv 3 \imod 4\\
                \ds \left(\frac kS\right)\left(1+i^S\right)\sqrt{k} &\mbox{if } k \equiv 0 \imod 4.
            \end{cases}
        \end{align*}
\end{theorem}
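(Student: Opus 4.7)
My plan is to reduce the evaluation to prime power moduli via a multiplicativity relation, and then to handle each prime power case in turn. For multiplicativity, suppose $k = k_1 k_2$ with $\gcd(k_1, k_2) = 1$; by the Chinese Remainder Theorem every $x$ modulo $k$ has a unique expression $x = k_2 u + k_1 v$ with $u$ modulo $k_1$ and $v$ modulo $k_2$. Since $x^2 = k_2^2 u^2 + 2 k_1 k_2 u v + k_1^2 v^2$ and the cross term $2k_1 k_2 uv$ contributes a trivial factor $e(2Suv) = 1$ to the exponential, the sum factors as $G(S; k_1 k_2) = G(S k_2; k_1)\, G(S k_1; k_2)$. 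Iterating on the prime factorisation of $k$ reduces the problem to evaluating $G(\cdot; p^n)$ for each prime power.

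For odd $p$ and $n \ge 2$, I would split $x = y + p^{n-1} z$ with $0 \le y < p^{n-1}$ and $0 \le z < p$. Since $x^2 \equiv y^2 + 2 p^{n-1} y z \ssmod{p^n}$, the inner sum over $z$ collapses to $p$ when $p \mid y$ (using $p \nmid 2S$) and vanishes otherwise. Substituting $y = pw$ produces the recurrence $G(S; p^n) = p \cdot G(S; p^{n-2})$, which reduces everything to $n = 1$. The modulus $|G(S;p)| = \sqrt{p}$ is then a short orthogonality argument, and the Legendre symbol factor for general $S$ follows from the change of variable $x \mapsto t x$ with $\left(\frac{t}{p}\right) = -1$, which sends $G(S;p)$ to $\left(\frac{S}{p}\right) G(1;p)$.

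\textbf{The main obstacle is determining the sign of $G(1;p)$} for an odd prime $p$; this is the celebrated problem that Gauss himself took four years to settle. I would resolve it along Dirichlet's lines by embedding $G(1;p)$ into a theta series and applying Poisson summation, or alternatively along Schur's lines by diagonalising the finite Fourier transform matrix whose eigenvalues are explicit fourth roots of unity. Either route yields $G(1;p) = \sqrt{p}$ when $p \equiv 1 \ssmod{4}$ and $G(1;p) = i\sqrt{p}$ when $p \equiv 3 \ssmod{4}$, which packaged together produce the first and third cases of the theorem.

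For $k = 2^n$, the case $n = 1$ gives $G(S;2) = 1 + e^{i \pi S} = 0$ for odd $S$, matching $k \equiv 2 \ssmod{4}$. For $n \ge 2$, the splitting $x = y + 2^{n-1} z$ with $z \in \{0,1\}$ yields $G(S; 2^n) = 2 \sum_{y=0}^{2^{n-1}-1} e(S y^2 / 2^n)$, a similar but subtler recurrence; iterating together with a completion-of-the-square in the low-level cases produces the factor $(1 + i^S)$, which vanishes exactly when $S \equiv 2 \ssmod{4}$ and otherwise equals $1 \pm i$. A final bookkeeping step using quadratic reciprocity repackages the accompanying prefactor as the Jacobi-Kronecker symbol $\bigl(\frac{k}{S}\bigr)$ displayed in the theorem.
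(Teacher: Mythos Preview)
The paper does not give its own proof of this statement: Theorem~2.1 is quoted as Gauss's classical evaluation, with a pointer to the monograph of Berndt, Evans and Williams for an elementary derivation. So there is no in-paper argument to compare against; your sketch is essentially the standard route one finds in that reference (twisted multiplicativity via CRT, descent on prime powers, and a separate determination of the sign of $G(1;p)$), and as an outline it is sound.

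A few local points are worth tightening. Your ``change of variable $x\mapsto tx$ with $\bigl(\tfrac{t}{p}\bigr)=-1$'' does not by itself produce the Legendre-symbol factor: the clean way is to rewrite $G(S;p)=\sum_{a}\bigl(\tfrac{a}{p}\bigr)e(Sa/p)$ and then substitute $a\mapsto S^{-1}a$, which yields $G(S;p)=\bigl(\tfrac{S}{p}\bigr)G(1;p)$. For $p=2$, your identity $G(S;2^n)=2\sum_{y=0}^{2^{n-1}-1}e(Sy^2/2^n)$ is correct for $n\ge 2$, but it is not yet a recurrence in $G$; one more splitting (or a direct computation for $n=2,3$ followed by a genuine two-step descent for $n\ge 4$) is needed to close the loop. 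Also, your remark that $1+i^{S}$ ``vanishes exactly when $S\equiv 2\ \smod{4}$'' is moot here, since $4\mid k$ forces $S$ odd. Finally, the reassembly from prime powers back to general $k$ via the twisted product $G(S;k_1k_2)=G(Sk_2;k_1)G(Sk_1;k_2)$ requires a short bookkeeping with Jacobi symbols and quadratic reciprocity to land on the displayed four cases; you allude to this but do not carry it out. None of these are gaps in the sense of a missing idea, only places where the write-up would need detail; the genuine depth, as you correctly flag, sits entirely in the sign of $G(1;p)$.
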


The following corollary is a special case of Theorem 2.1 for $k=p^n$.

\begin{coro}  
    Let $p$ be a prime, $n \in \nn$ and $S \in \zz$ coprime to $p$.

If $p$ is odd, we have
        \begin{align*}
            G(S;p^n) = i^{\left(\frac{p^n-1}2\right)^2} \left(\frac Sp\right)^n \sqrt{p^n}.
        \end{align*}

If $p=2$, we have
        \begin{align*}
            G(S;2^n) = \begin{cases}
                0 &\mbox{if } n = 1\\
                \ds (1+i^S)\left(\frac 2S\right)^n \sqrt{2^n} &\mbox{if } n > 1.
            \end{cases}
        \end{align*}
\end{coro}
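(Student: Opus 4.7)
The plan is to derive Corollary 2.1 as a direct specialization of Theorem 2.1 with $k=p^n$, splitting into cases based on $k \imod 4$ and verifying that the unified expression $i^{((p^n-1)/2)^2}$ correctly encodes the sign that appears in Gauss's formula.

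First I would handle the case $p$ odd. Here $p^n$ is odd, so Theorem 2.1 splits into the subcases $p^n \equiv 1 \imod 4$ and $p^n \equiv 3 \imod 4$. In the first subcase, $(p^n-1)/2$ is even, so $((p^n-1)/2)^2$ is divisible by $4$ and $i^{((p^n-1)/2)^2} = 1$, matching the $\left(\frac{S}{p^n}\right)\sqrt{p^n}$ branch. In the second subcase, $(p^n-1)/2$ is odd, so $((p^n-1)/2)^2 \equiv 1 \imod 4$, giving $i^{((p^n-1)/2)^2} = i$, which precisely accounts for $\sqrt{-p^n} = i\sqrt{p^n}$. In both subcases, multiplicativity of the Jacobi symbol in its upper argument yields $\left(\frac{S}{p^n}\right) = \left(\frac{S}{p}\right)^n$, and combining these observations gives the stated closed form.

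Next I would handle $p=2$. When $n=1$ we have $k=2\equiv 2\imod 4$, and Theorem 2.1 immediately gives $G(S;2)=0$. When $n\geq 2$ we have $2^n\equiv 0\imod 4$, so Theorem 2.1 yields
\[
G(S;2^n) = \left(\frac{2^n}{S}\right)(1+i^S)\sqrt{2^n}.
\]
Since $S$ is coprime to $2$, multiplicativity of the Jacobi symbol in the upper argument gives $\left(\frac{2^n}{S}\right) = \left(\frac{2}{S}\right)^n$, producing the stated formula.

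There is no real obstacle here: the only nontrivial bookkeeping is the parity analysis of $(p^n-1)/2$ used to collapse the two odd-modulus branches of Theorem 2.1 into the single factor $i^{((p^n-1)/2)^2}$. Everything else is a direct substitution and an application of the multiplicativity of the Jacobi symbol.
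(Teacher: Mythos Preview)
Your approach is correct and matches the paper's: the paper presents Corollary 2.1 as an immediate specialization of Theorem 2.1 at $k=p^n$ without giving a separate proof, and your case analysis supplies exactly the routine details one would fill in. One small slip in wording: in the odd-$p$ case the identity $\left(\frac{S}{p^n}\right) = \left(\frac{S}{p}\right)^n$ uses multiplicativity of the Jacobi symbol in its \emph{lower} (denominator) argument, not the upper one; the conclusion is unaffected.
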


We now prove some basic results for  the  Gauss sum (1.1) and for  its variations.

\begin{lemma}  
Let $p$ be a prime, $n \in \nn$ and $S \in \zz$ be coprime to $p$.
We identify $S^{-1}$ with the least positive integer residue of the inverse of $S$  modulo $p^n$. Then we have
        \begin{align*}
             G(S^{-1};p^n) = G(S;p^n)    \text{ and }  G(S^2;p^n) = G(1;p^n).
        \end{align*}
\end{lemma}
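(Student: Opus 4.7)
The plan is to prove both identities by a linear change of variables in the summation index, using the fact that multiplication by a unit modulo $p^n$ permutes the residues $\{0,1,\dots,p^n-1\}$, together with the fact that $e(\alpha)$ is invariant under integer shifts of $\alpha$.

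For the first identity, I would start from the definition
\[
G(S^{-1};p^n) = \sum_{x=0}^{p^n-1} e\!\left(\frac{S^{-1}x^2}{p^n}\right),
\]
and make the substitution $x \mapsto S x$. Since $\gcd(S,p)=1$, this map is a bijection on a complete residue system modulo $p^n$. The exponent becomes $S^{-1}(Sx)^2 = S \cdot (S S^{-1}) x^2 \equiv S x^2 \pmod{p^n}$, and because $e(\cdot)$ depends only on the residue modulo $1$, the exponential equals $e(Sx^2/p^n)$. Summing over the shifted range recovers $G(S;p^n)$.

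For the second identity, the same idea works with the substitution $x \mapsto S^{-1} x$ in $G(S^2;p^n)$. Then $S^2 (S^{-1}x)^2 = (SS^{-1})^2 x^2 \equiv x^2 \pmod{p^n}$, so each term equals $e(x^2/p^n)$, and the sum becomes $G(1;p^n)$.

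The only subtlety is bookkeeping: verifying that $e\bigl(Sx^2/p^n\bigr) = e\bigl(S^{-1}(Sx)^2/p^n\bigr)$ requires only that $S S^{-1} - 1$ is divisible by $p^n$, which holds by the definition of $S^{-1}$ given in the statement, and that multiplying an exponent by an integer shift leaves $e(\cdot)$ unchanged. No appeal to Theorem 2.1 or Corollary 2.1 is needed, and nothing distinguishes odd $p$ from $p = 2$; in fact the argument works verbatim with $p^n$ replaced by any positive integer $k$ and $S$ coprime to $k$. I expect no real obstacle here beyond writing the substitution cleanly.
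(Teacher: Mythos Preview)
Your argument is correct. The substitution $x\mapsto Sx$ (respectively $x\mapsto S^{-1}x$) is a bijection on $\zz/p^n\zz$ since $\gcd(S,p)=1$, and the congruence $SS^{-1}\equiv 1\pmod{p^n}$ guarantees that the exponents differ by an integer, so each summand matches exactly. There is no gap.

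Your route, however, is genuinely different from the paper's. The paper proves the lemma by invoking the explicit evaluation of $G(S;p^n)$ from Corollary~2.1, splitting into the cases $p$ odd and $p=2$. For odd $p$ it uses the multiplicativity of the Legendre symbol to see $\bigl(\tfrac{S^{-1}}{p}\bigr)=\bigl(\tfrac{S}{p}\bigr)$ and $\bigl(\tfrac{S^2}{p}\bigr)=1$; for $p=2$ it argues that $S^{-1}\equiv S\pmod{8}$ (via $S^2\equiv 1\pmod 8$) so that the Kronecker symbol $\bigl(\tfrac{2}{S}\bigr)$ and the power $i^{S}$ are unchanged when $S$ is replaced by $S^{-1}$. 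Your change-of-variables argument is more elementary, avoids the deep Gauss evaluation entirely, treats all primes uniformly, and indeed extends verbatim to an arbitrary modulus $k$ with $\gcd(S,k)=1$. The paper's approach, by contrast, is tied to the prime-power modulus and relies on the closed form for $G(S;p^n)$, though it fits the paper's overall strategy of reducing everything to Corollary~2.1.
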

\begin{proof}
   We first suppose that $p$ is odd. By Corollary 2.1, we have
        \begin{align*}
            G(S^2;p^n) = \left(\frac {S^2}p\right)^n i^{\left(\frac{p^n-1}2\right)^2} \sqrt{p^n} = G(1;p^n).
        \end{align*}
As $\ds \left(\frac {S^{-1}}p\right) = \left(\frac {S^{2}}p\right)\left(\frac{S^{-1}}p\right) = \left(\frac Sp\right)$,
Corollary 2.1 yields $G(S^{-1};p^n) = G(S;p^n)$.

    We now suppose that $p=2$. If $n=1$, then we have $G(S;2)=0=G(S^{-1};2)$. We assume that $n\geq 2$.
As $S$ is odd,  we have $S^2 \equiv 1 \imod 8$, which implies that $S^{-1} \equiv 1 \imod 8$ so that $\ds \left(\frac 2{S^{-1}}\right) = \left(\frac 2S\right)$ and
further that $i^{S^{-1}} = i^{S}$, so by Corollary 2.1, we have $G(S^{-1};2^n) = G(S;2^n)$. By similar reasoning, one can see that $\ds G(S^2;2^n) = G(1;2^n)$.
\end{proof}

\begin{lemma}  
Let $p$ be a prime, $n \in \nn$ and $S \in \zz$ coprime to $p$. Let $\alpha, \beta \in \nn_0$ satisfy $\alpha, \beta \leq n$.

If $p$ is odd, we have
\begin{align*}
\sum_{\substack{x = 0\\x \equiv 0 \imod {p^{\alpha}}}}^{p^n-1} e\left(\frac{S p^{\beta} x^2}{p^n}\right) = \begin{cases}
p^{n-\alpha} &\mbox{if } 2\alpha  + \beta \geq n\\
p^{\beta} \cdot G(S;p^{n-\beta}) &\mbox{if } 2\alpha + \beta \leq n. \end{cases}
\end{align*}

If $p=2$, we have
\begin{align*}
    \sum_{\substack{x=0\\x \equiv 0 \imod {2^\alpha}}}^{2^n-1} e\left(\frac{S2^{\beta} x^2}{2^n}\right) = \begin{cases}
        2^{n-\alpha} &\mbox{if } 2\alpha + \beta\geq n\\
        0 &\mbox{if } 2\alpha+\beta = n-1\\
        2^{\beta}\cdot G(S;2^{n-\beta}) &\mbox{if } 2\alpha+\beta \leq n-2.
        \end{cases}
\end{align*}
\end{lemma}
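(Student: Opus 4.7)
The natural approach is to eliminate the congruence condition $x \equiv 0 \imod{p^\alpha}$ via the substitution $x = p^\alpha y$, where $y$ ranges over $\{0, 1, \ldots, p^{n-\alpha}-1\}$. This transforms the sum into
\begin{align*}
\sum_{y=0}^{p^{n-\alpha}-1} e\!\left(\frac{S p^{2\alpha+\beta} y^2}{p^n}\right).
\end{align*}
From here the proof splits on the sign of $n - 2\alpha - \beta$, as in the statement.

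When $2\alpha + \beta \geq n$, the rational $S p^{2\alpha+\beta}/p^n$ is an integer, so every term equals $1$ and the sum is $p^{n-\alpha}$. This case is identical for odd $p$ and for $p = 2$.

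When $2\alpha + \beta < n$, set $m = n - 2\alpha - \beta \geq 1$, so the exponent reduces to $S y^2 / p^m$. Since this summand is periodic in $y$ with period $p^m$, and $y$ ranges over $p^{n-\alpha}$ consecutive integers with $p^{n-\alpha}/p^m = p^{\alpha+\beta}$, the sum collapses to
\begin{align*}
p^{\alpha+\beta} \sum_{y=0}^{p^m - 1} e\!\left(\frac{S y^2}{p^m}\right) = p^{\alpha+\beta}\, G(S;\, p^{n-2\alpha-\beta}).
\end{align*}
What remains is to reconcile this expression with the stated $p^{\beta} G(S; p^{n-\beta})$. I would do this by evaluating both via Corollary 2.1. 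For odd $p$, the powers of $p$ on both sides match because $p^{\alpha+\beta} \cdot p^{(n-2\alpha-\beta)/2} = p^{(n+\beta)/2} = p^\beta \cdot p^{(n-\beta)/2}$; the Jacobi factors $\left(\frac{S}{p}\right)^{n-2\alpha-\beta}$ and $\left(\frac{S}{p}\right)^{n-\beta}$ agree since they differ by $\left(\frac{S}{p}\right)^{2\alpha} = 1$; and the factor $i^{((p^k-1)/2)^2}$ depends only on the parity of $k$ (and only when $p \equiv 3 \imod 4$), which is preserved since $(n-2\alpha-\beta)$ and $(n-\beta)$ have the same parity. For $p=2$ with $m \geq 2$, the same bookkeeping with $(1+i^S)(2/S)^m \sqrt{2^m}$ works after observing $(2/S)^{2\alpha}=1$.

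The one genuinely new feature for $p=2$ is the middle case $2\alpha + \beta = n-1$, that is, $m=1$. The reduced inner sum is $G(S;2) = 0$ by Corollary 2.1, so the full sum vanishes; this is exactly the degenerate case the lemma singles out. The main (minor) obstacle is the index-juggling to verify that $p^{\alpha+\beta} G(S;p^{n-2\alpha-\beta})$ and $p^{\beta} G(S;p^{n-\beta})$ agree as closed-form expressions—everything else is a clean substitution and periodicity argument.
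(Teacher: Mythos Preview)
Your proposal is correct and follows essentially the same route as the paper: substitute $x=p^{\alpha}y$, split on whether $2\alpha+\beta$ reaches $n$, and in the remaining case use periodicity (the paper phrases this via (1.4)) to reach $p^{\alpha+\beta}G(S;p^{n-2\alpha-\beta})$ before identifying it with $p^{\beta}G(S;p^{n-\beta})$ through Corollary~2.1. The only cosmetic difference is that the paper compresses your factor-by-factor reconciliation into the single remark that $p^{n-\beta-2\alpha}\equiv p^{n-\beta}\imod 4$, which is exactly the parity observation you spell out.
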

\begin{proof}

We first suppose that $p$ is odd. As $\alpha \leq n$, we have
\begin{align}
\sum_{\substack{x = 0\\x \equiv 0 \imod {p^\alpha}}}^{p^n-1} e\left(\frac{S p^{\beta} x^2}{p^n}\right) &=\sum_{x=0}^{p^{n-\alpha}-1} e\left(\frac{S p^{\alpha+\beta} x^2}{p^{n-\alpha}}\right) = G(Sp^{\alpha+\beta};p^{n-\alpha}).
\end{align}
If $2\alpha+\beta \geq n$, we see that (2.1) simplifies to $p^{n-\alpha}$. If $2\alpha+\beta \leq n$,  by (1.4) and Corollary 2.1, we have
    \begin{align*}
        G(Sp^{\alpha+\beta};p^{n-\alpha}) &= p^{\alpha+\beta} \cdot G(S;p^{n-\beta-2\alpha}) = p^{\beta}\cdot G(S;p^{n-\beta}),
    \end{align*}
where we have used the fact that $p^{n-\beta-2\alpha} \equiv p^{n-\beta} \imod 4$. We have similar reasoning for $p=2$, with the exception that if $\ds 2\alpha+\beta
=n-1$ then, by (1.4) and Corollary 2.1, we have $\ds G(S2^{\alpha+\beta};2^{n-\alpha}) = 0$ .
\end{proof}

Note that if $p$ is odd and $2\alpha+\beta=n$ then both statements of Lemma 2.2 agree. Indeed, under this assumption, we see from Corollary 2.1 that
    \begin{align*}
        p^{\beta} \cdot G(S;p^{n-\beta}) = p^{n-2\alpha} \cdot G(S;p^{2\alpha}) = p^{n-\alpha}.
    \end{align*}

We emphasize that the following proof of Lemma 2.3 is clearly modeled after the proof of Lemma~ 3.1~ \cite[pp. 145-147]{AAW} in the paper by Alaca, et al.

\begin{lemma} 
Let $p$ be a prime, $n \in \nn$, $w \in \zz$ and $S \in \zz$ coprime to $p$. Let $\alpha, \beta\in \nn_0$ satisfy $\alpha,\beta \leq n$.

If $p$ is odd and $2\alpha +\beta \leq n$, we have
    \begin{align*}
        \sum_{\substack{x=0\\x \equiv w \imod{p^{\alpha}}}}^{p^n-1} e\left(\frac{Sp^{\beta}x^2}{p^n}\right) = \begin{cases}
            0 &\mbox{if } w \not\equiv 0 \imod {p^{\alpha}}\\
            p^{\beta} \cdot G(S;p^{n-\beta}) &\mbox{if } w \equiv 0 \imod {p^{\alpha}}.
        \end{cases}
    \end{align*}

If $p=2$ and $2\alpha+\beta \leq n -2$, we have
    \begin{align*}
        \sum_{\substack{x=0\\x \equiv w \imod{2^{\alpha}}}}^{2^n-1} e\left(\frac{S 2^{\beta}x^2}{2^n}\right) = \begin{cases}
            0 &\mbox{if } w \not\equiv 0 \imod {2^{\alpha}}\\
            2^{\beta} \cdot G(S;2^{n-\beta}) &\mbox{if } w \equiv 0 \imod {2^{\alpha}}.
        \end{cases}
    \end{align*}
\end{lemma}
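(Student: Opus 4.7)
The case $w \equiv 0 \imod{p^\alpha}$ is immediate from Lemma 2.2, since the summation condition $x \equiv w \imod{p^\alpha}$ then coincides with $x \equiv 0 \imod{p^\alpha}$, and the hypotheses $2\alpha+\beta \leq n$ (for $p$ odd) and $2\alpha+\beta \leq n-2$ (for $p=2$) are exactly the ones under which Lemma 2.2 returns $p^\beta G(S;p^{n-\beta})$ and $2^\beta G(S;2^{n-\beta})$ respectively. So the entire content of the lemma is the vanishing in the remaining case $w \not\equiv 0 \imod{p^\alpha}$.

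For that case, let $\Sigma$ denote the sum. I plan a shift-invariance argument: substitute $x \mapsto x+c \imod{p^n}$ for a chosen $c$ divisible by $p^\alpha$, so that the residue class $\{x : x \equiv w \imod{p^\alpha}\}$ is permuted bijectively and $\Sigma$ is invariant. Expanding $(x+c)^2 = x^2+2cx+c^2$ in the exponential introduces the factors $e(2Sp^\beta cx/p^n)$ and $e(Sp^\beta c^2/p^n)$. I will choose $c$ so that the constant piece $Sp^\beta c^2/p^n$ is an integer (hence contributes $1$), while the cross term $2Sp^\beta cx/p^n$ reduces to $\lambda x/p^\alpha$ with $\gcd(\lambda,p)=1$. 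Using $x \equiv w \imod{p^\alpha}$, this cross term contributes the constant phase $e(\lambda w/p^\alpha)$, which I pull outside the sum to obtain $\Sigma = e(\lambda w/p^\alpha)\,\Sigma$. Since $p \nmid \lambda$ and $p^\alpha \nmid w$, this phase is a nontrivial root of unity, forcing $\Sigma = 0$.

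The explicit choice will be $c = p^{n-\alpha-\beta}$ for $p$ odd and $c = 2^{n-\alpha-\beta-1}$ for $p=2$. In the odd case, $p^\alpha \mid c$ and $Sp^\beta c^2/p^n = Sp^{n-2\alpha-\beta} \in \zz$ both follow from $n \geq 2\alpha+\beta$, and the cross term becomes $2Sx/p^\alpha$, giving $\lambda = 2S$. In the even case the analogous verifications invoke the stronger hypothesis $2\alpha+\beta \leq n-2$ in two places: to ensure $2^\alpha \mid c$ and to make the constant piece $S \cdot 2^{n-2\alpha-\beta-2}$ an integer; the cross term is then $Sx/2^\alpha$, so $\lambda = S$.

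The main subtlety is exactly the extra factor of $2$ appearing in the cross term when $p=2$, which is what forces both the weakened hypothesis $2\alpha+\beta \leq n-2$ and the decreased exponent in the shift $c$; without it, the phase $e(\lambda w/p^\alpha)$ with $\lambda$ possibly even would only detect $w$ modulo $2^{\alpha-1}$ rather than $2^\alpha$. Everything beyond selecting the correct shift is routine bookkeeping with exponents of $p$.
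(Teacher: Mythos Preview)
Your argument is correct, but it proceeds along a different route from the paper's. The paper (following Alaca--Alaca--Williams) inserts the additive-character detector $\frac{1}{p^\alpha}\sum_{y=0}^{p^\alpha-1} e((x-w)y/p^\alpha)$ for the congruence condition, swaps the order of summation, completes the square in $x$ to factor out $G(S;p^{n-\beta})$, and is left with the geometric sum $\sum_{y} e(-wy/p^\alpha)$, which vanishes precisely when $p^\alpha \nmid w$. Your shift-invariance trick bypasses all of this: the substitution $x \mapsto x+c$ with $c=p^{n-\alpha-\beta}$ (resp.\ $c=2^{n-\alpha-\beta-1}$) directly yields $\Sigma = e(\lambda w/p^\alpha)\,\Sigma$ with $\gcd(\lambda,p)=1$, forcing $\Sigma=0$. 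Your approach is shorter and more elementary for the vanishing statement---no orthogonality relation, no completion of the square---and your discussion of why the $p=2$ case demands the stronger hypothesis $2\alpha+\beta\le n-2$ is clearer than what one extracts from the paper's argument. On the other hand, the paper's method is more uniform in that it handles both cases of $w$ in a single computation (the geometric sum evaluates to $p^\alpha$ or $0$), and it fits the broader Gauss-sum toolkit used elsewhere in the paper.
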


\begin{proof}
    We first suppose that $p$ is odd and $2\alpha +\beta \leq n$.
If $\ds w \equiv 0 \imod {p^{\alpha}}$, the statement of the lemma is given by Lemma 2.2 and so we may assume that $p^{\alpha} \nmid w$. We have
        \begin{align}
            \sum_{\substack{x=0\\x \equiv w \imod {p^{\alpha}}}}^{p^n-1} e\left(\frac{Sp^{\beta}x^2}{p^n}\right) &= \frac{1}{p^{\alpha}} \sum_{x=0}^{p^n-1} e\left(\frac{Sx^2}{p^{n-\beta}}\right) \sum_{y=0}^{p^{\alpha}-1} e\left(\frac{(x-w)y}{p^{\alpha}}\right)\notag\\
            &= \frac{1}{p^{\alpha}} \sum_{y=0}^{p^{\alpha}-1} e\left(\frac{-wy}{p^{\alpha}}\right) \sum_{x=0}^{p^n-1} e\left(\frac{Sx^2}{p^{n-\beta}} + \frac{xy}{p^{\alpha}}\right)\notag\\
            &= \frac{p^{\beta}}{p^{\alpha}} \sum_{y=0}^{p^{\alpha}-1} e\left(\frac{-wy}{p^{\alpha}}\right) \sum_{x=0}^{p^{n-\beta}-1} e\left(\frac{Sx^2+ xy p^{n-\alpha-\beta}}{p^{n-\beta}}\right),
        \end{align}
where we have used (1.4) to extract $p^{\beta}$. Observe that as $2\alpha+\beta \leq n$, we have $\ds \left(p^{n-\alpha-\beta}\right)^2 \equiv 0 \imod
{p^{n-\beta}}$. By completing the square modulo $p^{n-\beta}$ we obtain
    \begin{align}
        Sx^2 + p^{n-\alpha-\beta}xy \equiv S(x+(2S)^{-1} p^{n-\alpha-\beta}xy)^2 \imod {p^{n-\beta}}.
    \end{align}
As $x$ runs over a complete residue system modulo $p^{n-\beta}$, so does the bracketed expression in (2.3).
Thus, (2.2) simplifies to
    \begin{align}
        \sum_{\substack{x=0\\x \equiv w \imod {p^{\alpha}}}}^{p^n-1} e\left(\frac{Sp^{\beta}x^2}{p^n}\right) &= \frac{p^{\beta}}{p^{\alpha}}\cdot G(S;p^{n-\beta}) \sum_{y=0}^{p^{\alpha}-1} e\left(\frac{-wy}{p^{\alpha}}\right).
    \end{align}
The innermost sum of (2.4) is a geometric sum. As $w \not\equiv 0 \imod {p^{\alpha}}$, the right-hand side of (2.4) will reduce to zero, which
completes the first part of the  lemma.

We now suppose that $p=2$ and $2\alpha+\beta \leq n -2$.
We proceed in a similar manner as in the odd prime case to arrive at
    \begin{align}
        \sum_{\substack{x=0\\x \equiv w \imod {2^{\alpha}}}}^{2^n-1} e\left(\frac{Sx^2}{2^n}\right) &= \frac{2^{\beta}}{2^{\alpha}} \sum_{y=0}^{2^{\alpha}-1} e\left(\frac{-wy}{2^{\alpha}}\right) \sum_{x=0}^{2^{n-\beta}-1}
e\left(\frac{Sx^2+2^{n-\alpha-\beta}xy}{2^{n-\beta}}\right).
    \end{align}
As $2\alpha + \beta \leq n - 2$, we have $\ds \left(2^{n-\alpha-\beta-1}\right)^2 \equiv 0 \imod {2^{n-\beta}}$.
Completing the square as before, we obtain
    \begin{align}
        Sx^2 + 2^{n-\alpha-\beta}xy \equiv S\left(x+S^{-1}2^{n-\alpha-\beta-1}y\right)^2 \imod {2^{n-\beta}}.
    \end{align}
Hence, with (2.6) we see that (2.5)  simplifies to
    \begin{align*}
       \sum_{\substack{x=0\\x \equiv w \imod {2^{\alpha}}}}^{2^n-1} e\left(\frac{Sx^2}{2^n}\right)= \frac{2^{\beta}}{2^{\alpha}}\cdot G(S;2^{n-\beta}) \sum_{y=0}^{2^{\alpha}-1} e\left(\frac{-wy}{2^{\alpha}}\right),
    \end{align*}
which simplifies to the second part of the lemma.
\end{proof}

\begin{lemma}  
    Let $p$ be a prime, $n \in \nn$, $w \in \zz$ and $S \in \zz$ coprime to $p$.
Let $\alpha, \beta \in \nn_0$ satisfy $\alpha, \beta \leq n$.

    If $p$ is odd and $2\alpha+\beta \leq n$, we have
        \begin{align*}
            \sum_{x=0}^{p^n-1} e\left(\frac{Sp^{\beta}(p^{\alpha}x+w)^2}{p^n}\right) = \begin{cases}
                0 &\mbox{if } w \not\equiv 0 \imod {p^{\alpha}}\\
                p^{\alpha+\beta} \cdot G(S;p^{n-\beta}) &\mbox{if } w \equiv 0 \imod {p^{\alpha}}.
            \end{cases}
        \end{align*}

    If $p=2$ and $2\alpha+\beta \leq n -2$, we have
        \begin{align*}
            \sum_{x=0}^{2^n-1} e\left(\frac{S2^{\beta}(2^{\alpha}x+w)^2}{2^n}\right) = \begin{cases}
                0 &\mbox{if } w \not\equiv 0 \imod {2^{\alpha}}\\
                2^{\alpha+\beta} \cdot G(S;2^{n-\beta}) &\mbox{if } w \equiv 0 \imod {2^{\alpha}}.
            \end{cases}
        \end{align*}
\end{lemma}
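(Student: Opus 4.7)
The plan is to reduce Lemma 2.4 directly to Lemma 2.3 via the linear substitution $u = p^{\alpha}x + w$. First I would note that the summand $e(Sp^{\beta}u^2/p^n)$ depends on the integer $u$ only through its residue modulo $p^n$, since $Sp^{\beta}u^2$ appears inside $e(\cdot)$ divided by $p^n$. Next I would analyse the multiset of values $\{p^{\alpha}x + w \imod{p^n} : 0 \le x \le p^n - 1\}$. Since $p^{\alpha}\cdot p^{n-\alpha} \equiv 0 \imod{p^n}$, as $x$ ranges over a complete residue system modulo $p^n$ the expression $p^{\alpha}x + w$ attains each residue class modulo $p^n$ that is congruent to $w$ modulo $p^{\alpha}$ exactly $p^{\alpha}$ times, and misses every other residue class.

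Consequently, the sum on the left-hand side of the lemma rewrites as
\begin{align*}
\sum_{x=0}^{p^n-1} e\left(\frac{Sp^{\beta}(p^{\alpha}x+w)^2}{p^n}\right) = p^{\alpha} \sum_{\substack{u=0 \\ u \equiv w \imod{p^{\alpha}}}}^{p^n-1} e\left(\frac{Sp^{\beta}u^2}{p^n}\right).
\end{align*}

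For $p$ odd with $2\alpha+\beta \le n$, Lemma 2.3 evaluates the inner sum: it vanishes if $w \not\equiv 0 \imod{p^{\alpha}}$ and equals $p^{\beta}\cdot G(S;p^{n-\beta})$ if $w \equiv 0 \imod{p^{\alpha}}$. Multiplying by the factor $p^{\alpha}$ produced by the substitution gives precisely the claimed value $p^{\alpha+\beta}\cdot G(S;p^{n-\beta})$. The case $p=2$ with $2\alpha+\beta \le n-2$ is handled identically by invoking the second half of Lemma 2.3, noting that the hypothesis has been arranged precisely so that Lemma 2.3 is applicable to the inner sum.

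I do not expect any genuine obstacle: the entire content of the proof is the substitution $u = p^{\alpha}x + w$ together with the elementary fiber count of $p^{\alpha}x \pmod{p^n}$. The only subtle point worth writing out is the periodicity argument that justifies folding the substituted variable back into the range $\{0,\dots,p^n-1\}$ with multiplicity $p^{\alpha}$; once this is established, the lemma follows with no further case analysis.
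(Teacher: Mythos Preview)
Your proposal is correct and follows essentially the same approach as the paper: both reduce to Lemma~2.3 via the substitution $u = p^{\alpha}x + w$ together with the observation that each residue class $u \equiv w \imod{p^{\alpha}}$ in $\{0,\dots,p^n-1\}$ is attained exactly $p^{\alpha}$ times. The only cosmetic difference is that the paper first strips off the factor $p^{\beta}$ (reducing to the case $\beta = 0$ via (1.4)) before performing the substitution, whereas you carry $p^{\beta}$ through and let Lemma~2.3 absorb it; this is a trivial reordering.
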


\begin{proof}
    For any prime $p$, from (1.4) we have
        \begin{align*}
            \sum_{x=0}^{p^n-1} e\left(\frac{Sp^{\beta}(p^{\alpha}x+w)^2}{p^n}\right) &= p^{\beta} \sum_{x=0}^{p^{n-\beta}-1} e\left(\frac{S(p^{\alpha}x+w)^2}{p^{n-\beta}}\right).
        \end{align*}
    Hence, we may replace $n$ by $n-\beta$ in the statements of the lemma to arrive at the same result.
Thus, we may suppose that $\beta = 0$. We have
        \begin{align}
            \sum_{x=0}^{p^n-1} e\left(\frac{S(p^{\alpha} x+w)^2}{p^n}\right) = \sum_{\substack{x=0\\x \equiv w \imod {p^{\alpha}}}}^{p^{n+\alpha}-1} e\left(\frac{Sx^2}{p^n}\right) = p^{\alpha} \sum_{\substack{x=0\\x \equiv w \imod {p^{\alpha}}}}^{p^{n}-1} e\left(\frac{Sx^2}{p^n}\right).
        \end{align}
    The assertions of the lemma follow by considering the parity of $p$ in (2.7) with respect to Lemma~2.3.
\end{proof}

We now prove a simple result regarding the double gauss sum $G(a,b,c;S;p^n)$.

    \begin{lemma}  
        Let $p$ be a prime, $n \in \nn$ and $S \in \zz$  coprime to $p$.
Let $b \in \zz$ and write $b \equiv p^{\beta}B \imod {p^n}$ for $\beta \in \nn_0$ and $B\in \zz $  coprime to $p$. Then we have
        \begin{align*}
            G(0,b,0;S;p^n) &= p^{n+\beta}.
        \end{align*}
\end{lemma}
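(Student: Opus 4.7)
The plan is to exploit the factorization of $ax^2+bxy+cy^2$ when $a=c=0$: the form degenerates to the bilinear expression $bxy$, so the double sum factorizes completely as an iterated pair of linear geometric sums rather than a genuine quadratic one.

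First I would substitute $b \equiv p^{\beta}B \imod{p^n}$ with $p\nmid B$, which gives
\[
G(0,b,0;S;p^n) = \sum_{x=0}^{p^n-1}\sum_{y=0}^{p^n-1} e\!\left(\frac{S B\, xy}{p^{n-\beta}}\right),
\]
after cancelling the factor $p^\beta$ between numerator and denominator. Next, I would fix $y$ and evaluate the inner sum over $x$. Since $p^n$ is a multiple of $p^{n-\beta}$, the $x$-range wraps around $p^\beta$ complete periods of the character $x \mapsto e(SBy x/p^{n-\beta})$, so
\[
\sum_{x=0}^{p^n-1} e\!\left(\frac{S B y\, x}{p^{n-\beta}}\right) = p^{\beta}\sum_{x=0}^{p^{n-\beta}-1} e\!\left(\frac{S B y\, x}{p^{n-\beta}}\right).
\]

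The inner geometric sum equals $p^{n-\beta}$ when $p^{n-\beta}\mid SBy$ and vanishes otherwise. Using $\gcd(SB,p)=1$, this divisibility condition reduces to $p^{n-\beta}\mid y$. Thus the inner sum over $x$ equals $p^{\beta}\cdot p^{n-\beta}=p^{n}$ precisely for those $y \in \{0,1,\dots,p^n-1\}$ divisible by $p^{n-\beta}$, and is zero for all other $y$. The number of such $y$ is $p^{n}/p^{n-\beta}=p^{\beta}$, so the outer sum contributes a factor $p^\beta$ and the total becomes $p^\beta\cdot p^n = p^{n+\beta}$, as desired.

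There is no serious obstacle: no completing of squares, no appeal to Gauss's quadratic evaluation, and no case split on the parity of $p$ is required, since the sum is purely linear in each variable once the other is fixed. The only care needed is bookkeeping with the indices $n$, $n-\beta$, and $\beta$ when extracting the $p^\beta$ factor via the wraparound identity analogous to (1.4); this mirrors the routine reductions already used throughout Section~2.
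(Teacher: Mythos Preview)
Your argument is correct and follows essentially the same route as the paper: both reduce the inner $x$-sum to a geometric sum and then count the $y$ for which it is nonzero. The only cosmetic difference is that the paper uses (1.5) to shrink both variables to the range $[0,p^{n-\beta}-1]$ and separates out the $\beta=n$ case explicitly, whereas your bookkeeping---extracting $p^{\beta}$ from the $x$-sum alone and then counting the $p^{\beta}$ admissible residues of $y$ modulo $p^{n}$---handles $\beta=n$ uniformly.
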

\begin{proof}
    We have
        \begin{align}
            G(0,b,0;S;p^n) &= \sum_{x,y=0}^{p^n-1} e\left(\frac{p^{\beta}Bxy}{p^n}\right).
        \end{align}
    The result is clear when $\beta =n$ and so we may assume that $\beta < n$.
Along with (1.5), the expression for $G(0,b,0;S;p^n)$ in (2.8) will simplify to
        \begin{align*}
       p^{2\beta} \sum_{x,y=0}^{p^{n-\beta}-1} e\left(\frac{Bxy}{p^{n-\beta}}\right)= p^{2\beta}\left[ p^{n-\beta}+ \sum_{y=1}^{p^{n-\beta}-1} \sum_{x=0}^{p^{n-\beta}-1} e\left(\frac{Bxy}{p^{n-\beta}}\right)\right] = p^{n+\beta},
        \end{align*}
which completes the proof.
\end{proof}

We now diagonalize our binary quadratic form $Q$ given in (1.6).

\begin{theorem}  
Let $Q$ be the binary quadratic form given in {\em (1.6)}. If $a \neq 0$, we have
\begin{align}
4a \cdot Q = (2ax+by)^2 + \DD y^2.
\end{align}
\end{theorem}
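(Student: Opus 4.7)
The statement is a standard completing-the-square identity, so the plan is essentially a routine algebraic verification rather than any structural argument. The hypothesis $a \neq 0$ is used only to make the coefficient $4a$ meaningful as a rescaling factor that produces a perfect square in $x$.

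My plan is to expand both sides and compare. First I would multiply out the left-hand side as
\begin{align*}
4a \cdot Q(x,y) = 4a^2 x^2 + 4ab\, xy + 4ac\, y^2.
\end{align*}
Then I would expand the square on the right:
\begin{align*}
(2ax+by)^2 = 4a^2 x^2 + 4ab\, xy + b^2 y^2.
\end{align*}
Subtracting, the $x^2$ and $xy$ terms cancel and only a $y^2$ term remains, namely $(4ac-b^2)y^2 = \DD y^2$ by the definition of the discriminant. Rearranging yields the claimed identity.

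The only thing worth flagging is that there is no real obstacle here: the identity is valid over $\zz$ (indeed over any commutative ring), and the assumption $a \neq 0$ is not strictly needed for the algebraic identity itself, but it is natural to state it since otherwise the leading coefficient $4a$ vanishes and the rescaling becomes useless for the intended application of reducing $G(a,b,c;S;p^n)$ to single Gauss sums. I would therefore present the proof in one or two lines, noting that the identity is simply Lagrange's completion of the square applied to the binary quadratic form $Q$.
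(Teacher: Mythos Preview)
Your proof is correct: direct expansion and comparison of coefficients establishes the identity with no gaps, and your remark that $a\neq 0$ is not logically required for the algebraic identity is accurate.

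The paper, however, takes a different route. Rather than verifying the identity by expansion, it writes $Q = [x\ y]\tfrac{M}{2}[x\ y]^T$ with $M=\begin{pmatrix}2a & b\\ b & 2c\end{pmatrix}$ and then factors $M=LDL^T$ with $L=\begin{pmatrix}1 & 0\\ b/(2a) & 1\end{pmatrix}$ and $D=\begin{pmatrix}2a & 0\\ 0 & \DD/(2a)\end{pmatrix}$; the change of variables $X_1=x+\tfrac{b}{2a}y$, $X_2=y$ then gives $Q=aX_1^2+\tfrac{\DD}{4a}X_2^2$, from which the identity follows after clearing the denominator $4a$. The paper itself remarks immediately afterward that one can simply ``expand the square and collect like terms,'' so your approach is explicitly acknowledged as a valid alternative. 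What the $LDL^T$ argument buys is a template that generalizes to quadratic forms in $n$ variables, which the authors flag as their reason for presenting it; your approach buys brevity and avoids introducing rational entries. For the binary case alone, your argument is the cleaner one.
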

\begin{proof}
Let
$\ds M := \left(\begin{array}{cc}
2a & b\\
b & 2c\end{array}\right)$. Then
$\ds Q = [x\quad y]\frac M2 [x \quad y]^{T}$.
As $a \neq 0$, we can express the matrix $M$ as $\ds
M = LDL^T$,
where $\ds L = \left(\begin{array}{cc}
1 & 0\\
\frac{b}{2a} & 1\end{array}\right)$ and $\ds D = \left(\begin{array}{cc}
2a & 0\\
0 & \frac{\DD}{2a}\end{array}\right)$. Therefore, we have
\begin{align}
    Q = [x \quad y] L \frac D2 L^T [x \quad y]^T = \left([x \quad y] L \right) \frac D2 \left([x \quad y] L \right)^T.
\end{align}
Let $\ds X_1:= x+ \frac{b}{2a} y$ and $X_2 := y$ denote the change of variables given by $L$. We substitute this into (2.10) to obtain
\begin{align*}
Q =[X_1 \hspace{5pt} X_2] \frac D2 [X_1 \hspace{5pt} X_2]^T
= a X_1^2 + \frac{\Delta}{4a}X_2^2 = \frac{1}{4a}(2aX_1)^2 + \frac{\Delta}{4a}X_2^2,
\end{align*}
which completes the proof.
\end{proof}

Note that one can easily expand the square and collect like terms in (2.9) to show equality. Our $LDL^T$ diagonalization method would generalize to a quadratic form
in $n$ variables, with a certain non-singularity condition. It's well known that any integral quadratic form is equivalent to a diagonal quadratic form with rational
coefficients \cite[pp. 69-70]{LED1}, and so our method may work for other diagonalizations of $Q$.

We look at the equation in (2.9) modulo a prime power. Allowing for the divisibility of $4a$, one can deduce the following corollary from Theorem 2.2.

\begin{coro} 
Let $Q$ be the quadratic form given in {\em (1.6)} and suppose $a \neq 0$.
Let $p$ be a prime and $n \in \nn$.
We write $a = p^{\alpha}A$ for $\alpha \in \nn_0$ and $A \in \zz$ coprime to $p$.

If $p$ is odd, we have
\begin{align*}
p^{\alpha} \cdot Q \equiv (4A)^{-1}(2Ap^{\alpha}x+by)^2 + (4A)^{-1}\Delta y^2 \imod {p^{n+\alpha}}.
\end{align*}

If $p=2$, we have
    \begin{align*}
    2^{\alpha+2} \cdot Q \equiv A^{-1}(2^{\alpha+1}Ax+by)^2 + A^{-1}\Delta y^2 \imod {2^{n+\alpha+2}}.
    \end{align*}
\end{coro}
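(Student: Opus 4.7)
The plan is to derive both congruences directly from the integer identity $4a\cdot Q = (2ax+by)^2 + \Delta y^2$ in Theorem 2.2 by multiplying through by a suitable unit and reducing modulo the appropriate prime power.

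First I would substitute $a = p^{\alpha}A$ into Theorem 2.2 to get the integer equation
\begin{align*}
4p^{\alpha}A\cdot Q = (2p^{\alpha}Ax+by)^2 + \Delta y^2.
\end{align*}
For the odd-prime case, since $\gcd(4A,p)=1$, the element $4A$ is a unit modulo $p^{n+\alpha}$, so its inverse $(4A)^{-1}$ exists there. Multiplying the previous identity by $(4A)^{-1}$ and reducing modulo $p^{n+\alpha}$ converts the left-hand side into $p^{\alpha}Q$, since $(4A)^{-1}\cdot 4p^{\alpha}A \equiv p^{\alpha}\pmod{p^{n+\alpha}}$, yielding exactly the claimed congruence after distributing $(4A)^{-1}$ across the two terms on the right.

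For $p=2$, the same identity reads $2^{\alpha+2}A\cdot Q = (2^{\alpha+1}Ax+by)^2 + \Delta y^2$. Here $A$ is odd, hence a unit modulo $2^{n+\alpha+2}$; multiplying through by $A^{-1}$ and reducing modulo $2^{n+\alpha+2}$ replaces the left-hand side by $2^{\alpha+2}Q$ and gives the second congruence.

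There is essentially no obstacle: the only subtlety is bookkeeping of moduli, namely checking that $p^{n+\alpha}$ (respectively $2^{n+\alpha+2}$) is large enough so that after inverting the unit the factor of $p^{\alpha}$ (respectively $2^{\alpha+2}$) survives on the left-hand side. Since $(4A)^{-1}\cdot 4A\equiv 1\pmod{p^{n+\alpha}}$ holds for an integer representative of the inverse, multiplying by $p^{\alpha}$ produces the congruence $p^{\alpha}\pmod{p^{n+\alpha}}$ as required, and the analogous calculation for $p=2$ goes through without incident.
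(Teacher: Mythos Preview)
Your proposal is correct and follows exactly the route the paper indicates: the paper states only that ``allowing for the divisibility of $4a$, one can deduce the following corollary from Theorem 2.2,'' and what you have written is precisely that deduction---substituting $a=p^{\alpha}A$ into the identity $4aQ=(2ax+by)^2+\Delta y^2$ and clearing the unit factor $4A$ (respectively $A$) modulo the relevant prime power. The bookkeeping remark about the modulus being large enough is accurate and completes the argument.
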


\section{Explicit Evaluation of Double Gauss Sums}

\begin{theorem}  
Let $p$ be a prime, $n \in \nn$, $S \in \zz$ coprime to $p$ and let $a, b, c \in \zz$ be as in {\em (1.9)}, namely
\begin{align*}
        (a,b,c)=1,\quad  a\equiv p^{\alpha}A \imod {p^n},\quad  b \equiv p^{\beta}B \imod {p^n},\quad  c \equiv 0 \imod {p^{\alpha}},
    \end{align*}
where $A, B \in \zz$ satisfy  $p \nmid AB$, and $\alpha , \beta \in \nn_0$. Note that if $\ds p^{n} \mid a$, then by convention we set $\alpha = n$ and $A = 1$. We
set $\DD := 4ac-b^2$.

If $p$ is odd, we have
\begin{align*}
G(a,b,c;S;p^n)  = G(SA;p^{n-\alpha}) \cdot G(SA\DD;p^{n+\alpha}).
\end{align*}

If $p=2$, we have
\begin{align*}
G(a,b,c;S;2^n) = \begin{cases}
        \ds \frac 14 \cdot G(SA;2^{n-\alpha}) \cdot G(SA\DD;2^{n+\alpha+2}) &\mbox{if } \alpha +1 < n\\
        (-1)^c (1+(-1)^{b+1}) &\mbox{if } \alpha+1\geq n=1\\
        2^n &\mbox{if } \alpha + 1 \geq n > 1.
    \end{cases}
\end{align*}
\end{theorem}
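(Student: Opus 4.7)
The plan is to apply Corollary 2.2 to diagonalize the quadratic form inside the exponential, at the cost of enlarging the modulus from $p^n$ to $p^{n+\alpha}$ for odd $p$, or to $2^{n+\alpha+2}$ for $p=2$. For odd $p$ this produces
\begin{align*}
G(a,b,c;S;p^n) = \sum_{y=0}^{p^n-1} e\!\left(\frac{S(4A)^{-1}\Delta y^2}{p^{n+\alpha}}\right)\sum_{x=0}^{p^n-1} e\!\left(\frac{S(4A)^{-1}(2Ap^\alpha x+by)^2}{p^{n+\alpha}}\right),
\end{align*}
so that the double sum factors as an outer $y$-sum times an inner shifted $x$-sum.

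For the inner sum I would substitute $u=2Ax\imod{p^n}$ (a bijection, since $2A$ is coprime to $p$) and exploit the fact that the summand has period dividing $p^n$ in $u$ to rewrite it as $p^{-\alpha}$ times the sum over $u\in\{0,\ldots,p^{n+\alpha}-1\}$. Lemma~2.4, applied with $\alpha_{\mathrm{Lem}}=\alpha$, $\beta_{\mathrm{Lem}}=0$, $w=by$, and modulus $p^{n+\alpha}$ (its hypothesis $2\alpha\leq n+\alpha$ amounting to $\alpha\leq n$), then evaluates the inner sum to $0$ when $by\not\equiv 0\imod{p^\alpha}$ and to $G(S(4A)^{-1};p^{n+\alpha})$ otherwise. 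Since $S(4A)^{-1}=SA\cdot(2A)^{-2}$ differs from $SA$ by a square, Corollary~2.1 rewrites this last Gauss sum as $G(SA;p^{n+\alpha})$.

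By (1.9) at least one of $\alpha,\beta$ vanishes, so the residual condition $by\equiv 0\imod{p^\alpha}$ is either vacuous (when $\alpha=0$) or forces $y\equiv 0\imod{p^\alpha}$ (when $\beta=0$; the normalization $p^\alpha\mid c$ further guarantees that $\Delta\equiv-B^2\imod{p}$ is a unit). In the first case the $y$-sum is handled by Lemma~2.2 directly to give $G(SA\Delta;p^n)$; in the second, the substitution $y=p^\alpha z$ collapses it to $G(SA\Delta;p^{n-\alpha})$. Combining yields $G(a,b,c;S;p^n)=G(SA;p^{n+\alpha})\cdot G(SA\Delta;p^{n-\alpha})$, and the claimed form is obtained via the symmetry identity
\begin{align*}
G(SA;p^{n+\alpha})\,G(SA\Delta;p^{n-\alpha}) = G(SA;p^{n-\alpha})\,G(SA\Delta;p^{n+\alpha}),
\end{align*}
which follows from Corollary~2.1 since $\left(\frac{\Delta}{p}\right)^{2\alpha}=1$ and the $i$-exponents $\bigl((p^{n\pm\alpha}-1)/2\bigr)^2$ enter symmetrically under the swap.

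For $p=2$ with $\alpha+1<n$ the same program runs using the second identity of Corollary~2.2, the substitution $u=Ax\imod{2^n}$, and the $p=2$ case of Lemma~2.4 with $\alpha_{\mathrm{Lem}}=\alpha+1$, $\beta_{\mathrm{Lem}}=0$, and modulus $2^{n+\alpha+2}$; its hypothesis $2(\alpha+1)\leq n+\alpha$ is exactly $\alpha+1<n$. A factor of $\tfrac12$ appears in each of the inner $x$-sum and the collapsed $y$-sum (reflecting the extra power of two in the modulus), which combines to yield the prefactor $\tfrac14$ in the stated formula. The remaining cases $\alpha+1\geq n$ fall outside Lemma~2.4's range and must be handled directly: when $\alpha\geq n$ one has $a\equiv c\equiv 0\imod{2^n}$, so $G(a,b,c;S;2^n)=G(0,b,0;S;2^n)$ is evaluated by Lemma~2.5, while the sub-case $\alpha=n-1$ is settled by a short enumeration (trivial for $n=1$ and a small direct computation for $n>1$). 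The main obstacle throughout is the bookkeeping for $p=2$: the extra factors of two in Corollary~2.2 shift every inequality by two, which is precisely what forces the edge cases out of Lemma~2.4's domain, while the symmetry identity above also requires care since the $i$-exponents for $p=2$ take a different explicit form.
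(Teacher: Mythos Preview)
Your proposal is correct and follows essentially the same route as the paper: diagonalize via Corollary~2.2 at the enlarged modulus, evaluate the inner sum with Lemma~2.4, then finish the outer sum with Lemma~2.2. The only cosmetic differences are that the paper enlarges both summation ranges to $p^{n+\alpha}$ (resp.\ $2^{n+\alpha+2}$) at the outset and divides by $p^{2\alpha}$ (resp.\ $2^{2(\alpha+2)}$), whereas you keep the ranges at $p^n$ and invoke periodicity; and that the paper treats the case $\alpha\le\beta$ by a direct change of variable rather than Lemma~2.4, arriving at the stated product without your swap identity (though it uses the equivalent relation $p^{-\alpha}G(SA;p^{n+\alpha})=G(SA;p^{n-\alpha})$ in the other case). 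Your handling of the $p=2$ edge cases also matches the paper's: Lemma~2.5 for $\alpha\ge n$, and a direct computation for $\alpha=n-1$ (the paper carries this out explicitly in about a page, so ``small'' is a mild understatement, but the idea is right).
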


\begin{proof}
We first suppose that $p$ is odd. By (1.2), (1.5) and Corollary 2.2, we have
    \begin{align}
        G(a,b,c;S;p^n) &= \frac{1}{p^{2\alpha}} G(a,b,c;Sp^{\alpha};p^{n+\alpha})\notag\\
         &\hspace{-10mm}= \frac 1{p^{2\alpha}} \sum_{x,y=0}^{p^{n+\alpha}-1} e\left(\frac{S \left( (4A)^{-1}(2Ap^{\alpha}x+by)^2 + (4A)^{-1} \DD y^2\right)}{p^{n+\alpha}}\right)\notag\\
        &\hspace{-10mm}= \frac{1}{p^{2\alpha}} \sum_{y=0}^{p^{n+\alpha}-1} e\left(\frac{S(4A)^{-1}\DD y^2}{p^{n+\alpha}}\right) \sum_{x=0}^{p^{n+\alpha}-1} e\left(\frac{S(4A)^{-1}(2Ap^{\alpha}x + p^{\beta}By)^2}{p^{n+\alpha}}\right).
    \end{align}

Assume that $\alpha \leq \beta$. We have
\begin{align*}
\ds 2Ap^{\alpha}x+p^{\beta}y \equiv p^{\alpha}(2Ax+p^{\beta-\alpha}By)\imod {p^{n+\alpha}}.
\end{align*}
As $x$ runs over a complete residue system modulo $p^{n+\alpha}$, so does $\ds 2Ax+p^{\beta-\alpha}By$ for any fixed value of $y$. Therefore, with (1.4) and Lemma
2.1, (3.1) will simplify to the statement of the theorem.

Assume now that $\beta < \alpha$ and in particular this means $\beta = 0$ which in turn implies $(\DD,p)=1$. As $\alpha \leq n$, by Lemma 2.4, we see that the
innermost sum of (3.1) is non-zero if and only if $y \equiv 0 \imod {p^{\alpha}}$. In such an instance, along with Lemma~ 2.1, the sum indexed by $x$ in (3.1) will
be given by $\ds p^{\alpha} \cdot G(SA;p^{n+\alpha})$. Hence, by modifying the index of the outermost sum in (3.1) by this congruence condition, the double sum in
(3.1) will simplify to
    \begin{align}
        \frac 1{p^{\alpha}} \cdot G(SA;p^{n+\alpha}) \sum_{\substack{y=0\\y \equiv 0 \imod {p^{\alpha}}}}^{p^{n+\alpha}-1} e\left(\frac{S(4A)^{-1} \DD y^2}{p^{n+\alpha}}\right).
    \end{align}
Observe that by Corollary 2.1, we have
    \begin{align}
        \frac 1{p^{\alpha}} G(SA;p^{n+\alpha}) &= \left(\frac {SA}p\right)^{n+\alpha} \imath^{\left(\frac{p^{n+\alpha}-1}2\right)^2} p^{\frac{n-\alpha}2}=\left(\frac {SA}p\right)^{n-\alpha} \imath^{\left(\frac{p^{n-\alpha}-1}2\right)^2} p^{\frac{n-\alpha}2}\notag\\
         &= G(SA;p^{n-\alpha}).
    \end{align}
As $\alpha \leq n$, we may use Lemma 2.2 to simplify the sum indexed by $y$ in (3.2). Thus,  with (3.3) and Lemma 2.1, (3.2) will simplify to $\ds G(SA;p^{n-\alpha})
\cdot G(SA\DD;p^{n+\alpha})$.

We now suppose that $p=2$. If $\alpha = n$, then $\beta = 0$ so that $b$ is odd. Additionally, $c$ is even, and so the statement of the theorem will agree with Lemma
2.5 for all $n \geq 1$. Thus, we may assume without loss of generality that $\alpha < n$. Similar to the above, with (1.2) and Corollary 2.2, we deduce that
$G(a,b,c;S;2^n)$ is given by
    \begin{align}
    \frac{1}{2^{2(\alpha+2)}} \sum_{y=0}^{2^{n+\alpha+2}-1} e\left(\frac{SA^{-1}\DD y^2}{2^{n+\alpha+2}}\right)\sum_{x=0}^{2^{n+\alpha+2}-1} e\left(\frac{SA^{-1}\left(2^{\alpha+1}Ax+2^{\beta}By\right)^2}{2^{n+\alpha+2}}\right).
    \end{align}
    If $\alpha +1 \leq \beta$, then we can extract a common factor of $2^{\alpha+1}$ as before so that with (1.4) and Lemma 2.1, (3.4) will simplify to
        \begin{align*}
            \frac{1}{2^{2(\alpha+2)}} &\cdot G(SA\DD;2^{n+\alpha+2}) \cdot G(SA2^{2(\alpha+1)};2^{n+\alpha+2})\notag\\
             &= \frac 14 \cdot G(SA\DD;2^{n+\alpha+2}) \cdot G(SA;2^{n-\alpha}).
        \end{align*}
    Suppose instead that $\beta < \alpha+1$, so that we have $\beta = 0$ and $\DD$ odd. We look to evaluate the innermost sum in (3.4). If $\alpha + 2 \leq n$, by Lemma 2.4, we have that (3.4) simplifies to
    \begin{align}
        \frac 1{2^{\alpha+3}} \cdot G(SA;2^{n+\alpha+2}) \sum_{\substack{y=0\\y \equiv 0 \imod {2^{\alpha}}}}^{2^{n+\alpha+2}-1} e\left(\frac{SA^{-1}\DD y^2}{2^{n+\alpha+2}}\right).
    \end{align}
Subsequently, as $\alpha < n$, by Lemma 2.2, (3.5) will simplify to
    \begin{align}
        \frac 1{2^{\alpha+3}}\cdot G(SA;2^{n+\alpha+2}) \cdot G(SA\DD;2^{n+\alpha+2}) = \frac 14 \cdot G(SA;2^{n-\alpha})\cdot G(SA\DD;2^{n+\alpha+2}),
    \end{align}
where we have simplified with Corollary 2.1 and Lemma 2.1 as necessary.

Thus, suppose now that $\alpha + 1 = n$ and observe that we cannot use Lemma 2.4 in this case. If $n=1$, then $a$ is odd and hence
    \begin{align*}
        G(a,b,c;S;2) &= \sum_{x,y=0}^1 e\left(\frac{S(ax+bxy+cy)}2\right) = 1 + (-1)^a + (-1)^c + (-1)^{a+b+c}\notag\\
            &= (-1)^c (1+ (-1)^{b+1}),
    \end{align*}
which agrees with the statement of the theorem.
Otherwise, if $\alpha+1 = n > 1$ we have $\beta = 0$, so that
    \begin{align}
        G(a,b,c;S;2^n) &= \sum_{x,y=0}^{2^n-1} e\left(\frac{S(2^{n-1}Ax^2+Bxy+cy^2)}{2^n}\right)\notag\\
            &= \sum_{y=0}^{2^n-1} e\left(\frac{Scy^2}{2^n}\right) \sum_{x=0}^{2^n-1} (-1)^x e\left(\frac{SBxy}{2^n}\right) .
    \end{align}
The sum indexed by $x$ in (3.7) can be written as
    \begin{align}
        \sum_{\substack{x=0\\x \text{ even}}}^{2^n-1} e\left(\frac{SBxy}{2^n}\right) - \sum_{\substack{x=0\\x \text{ odd}}}^{2^n-1} e\left(\frac{SBxy}{2^n}\right) &= 2 \sum_{\substack{x=0\\x \text{ even}}}^{2^n-1} e\left(\frac{SBxy}{2^n}\right) - \sum_{x=0}^{2^n-1} e\left(\frac{SBxy}{2^n}\right)\notag\\
    &= 2\sum_{\substack{x=0\\y \equiv 0 \imod {2^{n-1}}}}^{2^{n-1}-1} 1 - \sum_{\substack{x=0\\y \equiv 0 \imod {2^n}}}^{2^n-1} 1.
    \end{align}
By assumption, we have $2^{\alpha} \mid c$ so we may write $c \equiv c_1 2^{\alpha} \imod {2^n}$ for some integer $c_1$. Hence, $\ds e\left(\frac{Scy^2}{2^n}\right)
= (-1)^{c_1y}$. Thus, together with (3.8), breaking up the sum in (3.7) according to the parity of $y$ yields
    \begin{align}
         G(a,b,c;S;2^n) = \sum_{\substack{y=0\\y \text{ even}}}^{2^n-1}
& \Bigg( 2\sum_{\substack{x=0\\y \equiv 0 \imod {2^{n-1}}}}^{2^{n-1}-1} 1 - \sum_{\substack{x=0\\y \equiv 0 \imod {2^n}}}^{2^n-1} 1\Bigg)\notag\\
    &+ (-1)^{c_1} \sum_{\substack{y=0\\y \text{ odd}}}^{2^n-1} \Bigg( 2\sum_{\substack{x=0\\y \equiv 0 \imod {2^{n-1}}}}^{2^{n-1}-1} 1 - \sum_{\substack{x=0\\y \equiv 0 \imod {2^n}}}^{2^n-1} 1\Bigg).
    \end{align}
As $n \geq 2$, the second term of (3.9) vanishes, and we're left with
    \begin{align*}
         G(a,b,c;S;2^n) &= \sum_{y=0}^{2^{n-1}-1} \Bigg( 2\sum_{\substack{x=0\\y \equiv 0 \imod {2^{n-2}}}}^{2^{n-1}-1} 1 - \sum_{\substack{x=0\\y \equiv 0 \imod {2^{n-1}}}}^{2^n-1} 1\Bigg)\notag\\
 &= 2 \sum_{y=0}^{2^{n-1}-1} \sum_{\substack{x=0\\y \equiv 0 \imod {2^{n-1}}}}^{2^{n-1}-1} 1 - \sum_{y=0}^{2^{n-1}-1} \sum_{\substack{x=0\\y \equiv 0 \imod {2^{n-1}}}}^{2^{n-1}-1} 1\notag\\
    &= 2^{n+1}-2^n = 2^n,
    \end{align*}
which agrees with the statement of the theorem.
\end{proof}

We observe that for $p$ odd, Theorem 3.1 agrees with Lemma 2.5 if $\alpha = n$. First, note that $\alpha > 0$ implies $\beta = 0$ and so $\DD$ is coprime to $p$.
Thus, by Corollary 2.1, the statement of Theorem 3.1 simplifies to
    \begin{align*}
        G(SA;p^{n-\alpha}) \cdot G(SA\DD; p^{n+\alpha}) = G(SA\DD;p^{2n}) = p^n.
    \end{align*}
Continuing in this manner, we have the following corollary.

\begin{coro}  
Let $p$ be a prime, $n \in \nn$, $S \in \zz$ coprime to $p$ and let $a, b, c \in \zz$ be as in {\em (1.9)}, namely
\begin{align*}
        (a,b,c)=1,\quad  a\equiv p^{\alpha}A \imod {p^n},\quad  b \equiv p^{\beta}B \imod {p^n},\quad  c \equiv 0 \imod {p^{\alpha}},\quad
    \end{align*}
where $A, B \in \zz$ satisfy  $p \nmid AB$,  and $\alpha , \beta \in \nn_0$. We recall our convention that if $\ds p^{n} \mid a$, then we set $\alpha = n$ and $A =
1$. We also  recall that $\DD := 4ac-b^2$. We write
\begin{align*}
\begin{cases}
\DD \equiv p^{\delta} D \imod {p^{n+\alpha}} &\mbox{if $p$ is odd}\\
\DD \equiv 2^{\delta}D \imod {2^{n+\alpha+2}} &\mbox{if $p=2$},
\end{cases}
\end{align*}
where $D \in \zz$ satisfies $p \nmid D$, and $\delta \in \nn_0$.

If $p$ is odd, we have
        \begin{align*}
                G(a,b,c;S;p^n) = p^{n+\frac{\delta}2} \left(\frac{SA}p\right)^{\delta}\left(\frac Dp\right)^{n+\alpha+\delta}\left(\frac{-1}p\right)^{(n+\alpha)(\delta+1)} i^{\left(\frac{p^{\delta}-1}2\right)^2}.
        \end{align*}

If $p=2$ and  $\alpha+1 < n$, we have
    \begin{align*}
        G(a,b,c;S;2^n) = \begin{cases}
            \ds 2^{\frac{3n+\alpha}2}\left(\frac 2{SA}\right)^{\delta}(1+\imath^{SA}) &\mbox{if } \delta = n+\alpha+2\\
            \ds 0 &\mbox{ if } \delta = n+\alpha+1\\
            \ds 2^{n+\frac{\delta}2} \left(\frac 2{SA}\right)^{\delta}\left(\frac 2D\right)^{n+\alpha+\delta} \imath^{SA\left(\frac{D+1}2\right)^2} &\mbox{if } \delta \leq n+\alpha.
        \end{cases}
    \end{align*}

If $p=2$ and  $\alpha+1 \geq n$, we have
    \begin{align*}
G(a,b,c;S;2^n) = \begin{cases}
        2^n &\mbox{if } \alpha + 1 \geq n > 1\\
        (-1)^c (1+(-1)^{b+1}) &\mbox{if } \alpha+1\geq n=1.
    \end{cases}
\end{align*}
\end{coro}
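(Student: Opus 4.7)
My plan is to unwind the compact product formula of Theorem~3.1 by substituting the explicit Gauss-sum evaluations from Corollary~2.1, with the case division in the statement corresponding to whether the $p$-adic valuation $\delta$ of $\Delta$ forces one of the Gauss-sum factors to degenerate. For $p$ odd, I start from $G(a,b,c;S;p^n) = G(SA;p^{n-\alpha}) \cdot G(SA\Delta;p^{n+\alpha})$ and write $\Delta = p^\delta D$ with $p \nmid D$; formula~(1.4) pulls out $p^\delta$ from the second factor, giving $G(SA\Delta;p^{n+\alpha}) = p^\delta G(SAD;p^{n+\alpha-\delta})$. Substituting Corollary~2.1 into each factor, the powers of $p$ combine to $p^{(n-\alpha)/2+\delta+(n+\alpha-\delta)/2} = p^{n+\delta/2}$, while the Jacobi symbols telescope via $(SA/p)^2 = (D/p)^2 = 1$ to $(SA/p)^\delta (D/p)^{n+\alpha+\delta}$, matching the claim.

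What remains for odd $p$ is the root-of-unity identity
\[
i^{((p^{n-\alpha}-1)/2)^2 + ((p^{n+\alpha-\delta}-1)/2)^2} = \left(\tfrac{-1}{p}\right)^{(n+\alpha)(\delta+1)} i^{((p^\delta-1)/2)^2},
\]
which I will prove by dispatching on $p \imod 4$. If $p \equiv 1 \imod 4$, then every factor of the form $i^{((p^k-1)/2)^2}$ equals $1$ and both sides collapse to $1$. If $p \equiv 3 \imod 4$, then $i^{((p^k-1)/2)^2}$ is $1$ or $i$ according to the parity of $k$, and a short case analysis on the parities of $n-\alpha$, $n+\alpha-\delta$, and $\delta$ (noting $(n-\alpha)+(n+\alpha-\delta)=2n-\delta$) settles each of the four small configurations.

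For $p=2$ with $\alpha+1<n$, I apply the same method to $G(a,b,c;S;2^n) = \tfrac{1}{4}\, G(SA;2^{n-\alpha})\, G(SA\Delta;2^{n+\alpha+2})$, splitting on the value of $\delta$. If $\delta = n+\alpha+2$, then (1.3) gives $G(SA\Delta;2^{n+\alpha+2}) = 2^{n+\alpha+2}$, so only $G(SA;2^{n-\alpha})$ needs expansion by Corollary~2.1. If $\delta = n+\alpha+1$, then (1.4) reduces the second factor to $2^{n+\alpha+1}\, G(SAD;2) = 0$. If $\delta \leq n+\alpha$, both Gauss sums are evaluated by Corollary~2.1, the Jacobi-type factor $(2/SA)^{n-\alpha}(2/SAD)^{n+\alpha+2-\delta}$ collapses via $(2/SA)^2 = 1$ to $(2/SA)^\delta (2/D)^{n+\alpha+\delta}$, and the pair $(1+i^{SA})(1+i^{SAD})$ must be matched against $i^{SA((D+1)/2)^2}$ using the key identity $(1+i^{SA})(1+i^{SAD}) = 2\, i^{SA((D+1)/2)^2}$, which simultaneously supplies the missing factor of $2$ needed to reach the exponent $n+\delta/2$. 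The remaining case $\alpha+1 \geq n$ is already Theorem~3.1 verbatim.

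The principal obstacle is the parity bookkeeping encoded in the two root-of-unity identities above; both are verified by direct finite case checks (on $p \imod 4$ and the parities of $n-\alpha$, $n+\alpha-\delta$, $\delta$ in the odd case, and on $SA, D \imod 4$ in the $p=2$ case). Once those identities are in hand, the three sub-cases of the corollary follow by straightforward substitution and collation of the $p$-powers, Jacobi symbols, and fourth-root-of-unity factors coming out of Theorem~3.1 and Corollary~2.1.
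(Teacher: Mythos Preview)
Your proposal is correct and follows essentially the same route as the paper: both start from Theorem~3.1, pull out $p^{\delta}$ (resp.\ $2^{\delta}$) via~(1.4), expand each Gauss-sum factor with Corollary~2.1, and then reduce the resulting root-of-unity products by a short finite case check. The only cosmetic difference is that for odd $p$ the paper organizes the identity $i^{((p^{n-\alpha}-1)/2)^2}\,i^{((p^{n+\alpha-\delta}-1)/2)^2} = \left(\tfrac{-1}{p}\right)^{(n+\alpha)(\delta+1)} i^{((p^{\delta}-1)/2)^2}$ by the parity of $\delta$, whereas you dispatch first on $p \bmod 4$; both verifications are equivalent. Your identity $(1+i^{SA})(1+i^{SAD}) = 2\,i^{SA((D+1)/2)^2}$ is exactly the paper's equation~(3.13), and your observation that this extra factor of $2$ lifts $2^{n+\delta/2-1}$ to $2^{n+\delta/2}$ is the same closing step.
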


\begin{proof}
    Suppose $p$ is odd. From Theorem 3.1, (1.4) and Corollary 2.1, we have
        \begin{align}
            G(a,b,c;S;p^n) &= G(SA;p^{n-\alpha}) \cdot G(SA p^{\delta}D;p^{n+\alpha})\notag\\
            &= \begin{cases}
                p^{n+\alpha} \cdot G(SA;p^{n-\alpha}) &\mbox{if } \delta = n+\alpha\\
                p^{\delta} \cdot G(SA;p^{n-\alpha}) \cdot G(SAD;p^{n+\alpha-\delta}) &\mbox{if } \delta < n+\alpha
            \end{cases}\notag\\
            &= \begin{cases}
                p^{\frac{3n+\alpha}2}\left(\frac {SA}p\right)^{n+\alpha} i^{\left(\frac{p^{n+\alpha}-1}2\right)^2} &\mbox{if } \delta = n+\alpha\\
                p^{n+\frac{\delta}2} \left(\frac{SA}p\right)^{\delta} \left(\frac Dp\right)^{n+\alpha+\delta}i^{\left(\frac{p^{n+\alpha}-1}2\right)^2}i^{\left(\frac{p^{n+\alpha+\delta}-1}2\right)^2} &\mbox{if } \delta < n+\alpha.
            \end{cases}
        \end{align}
    Observe that the cases in (3.10) will agree when $\delta = n+\alpha$. Subsequently, we have
        \begin{align}
            i^{\left(\frac{p^{n+\alpha}-1}2\right)^2}i^{\left(\frac{p^{n+\alpha+\delta}-1}2\right)^2} 
&= \left.\begin{cases}
            \left(\frac{-1}p\right)^{n+\alpha} &\mbox{if } \delta \text{ even}\\
            i^{\left(\frac{p^{\delta}-1}2\right)^2} &\mbox{if } \delta \text{ odd}
            \end{cases}\right. \\
   &= \left(\frac{-1}p\right)^{(n+\alpha)(\delta+1)} i^{\left(\frac{p^{\delta}-1}2\right)^2}. \nonumber
        \end{align}
    Hence, with (3.10) and (3.11) we may deduce the statement of the corollary.

    Suppose now $p=2$ and $\alpha+1< n$. Note that this implies $n \geq 2$. By Theorem~ 3.1, (1.4) and Corollary 2.1, we have
        \begin{align}
            G(a,b,c;S;2^n) &= \frac 14 \cdot G(SA;2^{n-\alpha}) \cdot G(SA2^{\delta}D;2^{n+\alpha+2})\notag\\
            &= \begin{cases}
                2^{n+\alpha} \cdot G(SA;2^{n-\alpha}) &\mbox{if } \delta = n+\alpha+2\\
                0 &\mbox{if } \delta = n +\alpha+1\\
                2^{\delta-2} \cdot G(SA;2^{n-\alpha}) \cdot G(SAD;2^{n+\alpha+2-\delta}) &\mbox{if } \delta \leq n+\alpha
            \end{cases}\notag\\
            &= \begin{cases}
                2^{\frac{3n+\alpha}2} \left(\frac 2{SA}\right)^{n+\alpha}(1+\imath^{SA}) &\mbox{if } \delta = n+\alpha+2\\
                0 &\mbox{if } \delta = n+\alpha+1\\
                2^{n+\frac{\delta}2-1} \left(\frac 2{SA}\right)^{\delta} \left(\frac 2D\right)^{n+\alpha+\delta}(1+i^{SA})(1+i^{SAD}) &\mbox{if } \delta \leq n+\alpha.
            \end{cases}
    \end{align}
    Depending on the residue class of $D$ modulo $4$, we have
        \begin{align}
            (1+i^{SA})(1+i^{SAD}) = \begin{cases}
                2\imath^{SA} &\mbox{if } D \equiv 1 \imod 4\\
                2 &\mbox{if } D \equiv 3 \imod 4.
            \end{cases}
        \end{align}
    Thus, with (3.12) and (3.13) we may deduce the statement of the corollary.

Finally the case $p=2$ and $\alpha+1 \geq n$ follows  immediately from Theorem 3.1.
\end{proof}

We note that the results of Theorem 3.1 and Corollary 3.1 agree with the results of Weber \cite[p. 22]{Weber} and Alaca, et al. \cite[pp. 129-132]{AAW}.

\section{Examples}  

We provide a few examples to illuminate our method.

\begin{example}  
Suppose that   $Q:=Q(x,y) = x^2 + xy + y^2$, so that $a=b=c=1$.
Thus $\ds M=\left(\begin{array}{cc}
    2 & 1\\
    1 & 2\end{array}\right)$
is the symmetric integral matrix associated with $Q$, and  $\DD = 3$.

For $p > 3$, we take $A = 1$, $D=3$ and $\alpha = \delta = 0$ in {\em Corollary 3.1}  to obtain
    \begin{align}
        G(1,1,1;S;p^n) = p^n \left(\frac {-3}p\right)^n.
    \end{align}

For $p=3$, we take $A=1$, $D=1$, $\alpha = 0$  and $\delta = 1$ in {\em Corollary 3.1}  to obtain
    \begin{align}
        G(1,1,1;S;3^n) = 3^{\frac{2n+1}2} \left(\frac S3\right) i.
    \end{align}

Finally, for $p=2$, we assume for the sake of discussion that $n \geq 2$. 
We take $A=1$, $D = 3$ and $\alpha = \delta = 0$ in {\em Corollary 3.1} to obtain 
    \begin{align}
        G(1,1,1;S;2^n) = 2^n \left(\frac 23\right)^{n} =(-1)^n 2^n.
    \end{align}
Alternatively, we can use {\em Theorem 3.1} and subsequently {\em Theorem 2.1} to obtain the same results.
\end{example}

We note that (4.1) and (4.2) are given by Corollary 2.1(i) \cite[p. 137]{AAW},
and (4.3) agrees with Corollary 3.2(i) \cite[p. 151]{AAW}  of Alaca, et al.

\begin{example}  
Suppose that $Q:= Q(x,y)=3x^2 + xy + 3y^2$, so that $a = c= 3$ and $b = 1$. Thus
$\ds M= \left(\begin{array}{cc}
    6 & 1\\
   1 & 6 \end{array}\right)$ is the symmetric integral matrix associated with $Q$, and  $\DD = 35$.

For $p > 7$, we take $A = 3$, $D = 35$ and  $\alpha = \delta = 0$ in {\em Corollary 3.1}  to obtain
    \begin{align*}
        G(3,1,3;S;p^n) = p^n \left(\frac{-35}p\right)^n.
    \end{align*}

For $p=7$, we take $A = 3$, $D = 5$, $\alpha = 0$ and $\delta = 1$  in {\em Corollary 3.1} to obtain
    \begin{align*}
        G(3,1,3;S;7^n) = 7^{\frac{2n+1}2} \left(\frac{3S}7\right) \left(\frac 57\right)^{n+1} i
        &= (-1)^n 7^n \left(\frac S7\right) \sqrt{-7}.
    \end{align*}

For $p=5$, we take  $A = 3$, $D = 7$, $\alpha = 0$ and $\delta = 1$  in {\em Corollary 3.1}  to obtain
    \begin{align*}
        G(3,1,3;S;5^n) = 5^{\frac{2n+1}2} \left(\frac{3S}5\right) \left(\frac 75\right)^{n+1} 
            &= (-1)^n 5^n \left(\frac S5\right) \sqrt{5}.
    \end{align*}

Finally, for $p=2$, and assuming $n \geq 2$, we take  $A = 3$, $D = 35$ and $\alpha = \delta = 0$ in {\em Corollary 3.1}
 and we conclude that
    \begin{align*}
        G(3,1,3;S;2^n) = 2^n \left(\frac 2{35}\right)^n = (-1)^n 2^n.
    \end{align*}
Alternatively, we can use  {\em Theorem 3.1} and {\em Theorem 2.1} to obtain the same results.
\end{example}

\section{Remarks}  

It is straightforward to deduce the following theorem using our method.
\begin{theorem}
    Let $k \in \nn$ be odd, and $a, b, c \in \zz$ be as in {\em (1.9)}. 
If $(a\DD,k)=1$, then
    \begin{align*}
        G(a,b,c;S;k) = \left(\frac{-\DD}k\right) \cdot k.
    \end{align*}
\end{theorem}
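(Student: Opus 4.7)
The plan is to reduce the theorem to the prime-power case already handled by Corollary 3.1, using a Chinese Remainder Theorem factorization of the double Gauss sum. Factor $k = \prod_{i=1}^{r} p_i^{n_i}$ into distinct odd prime powers and set $k_i := k/p_i^{n_i}$. The standard CRT parameterization $x = \sum_i k_i u_i$ and $y = \sum_i k_i v_i$, with each pair $(u_i, v_i)$ ranging independently over $\zz/p_i^{n_i}\zz$, is a bijection $\prod_i (\zz/p_i^{n_i}\zz)^2 \to (\zz/k\zz)^2$. Expanding $Q(x,y) = ax^2+bxy+cy^2$ in these new variables produces diagonal contributions $k_i^2 \cdot Q(u_i, v_i)$ together with mixed terms containing $k_i k_j$ for $i \neq j$. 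Since $k_i k_j$ is divisible by $k$ whenever $i \neq j$ (every prime power $p_l^{n_l}$ in $k$ divides at least one of $k_i, k_j$), the mixed terms contribute trivial exponentials. After simplifying $k_i^2/k = k_i/p_i^{n_i}$ in the diagonal terms, we arrive at
$$G(a,b,c;S;k) = \prod_{i=1}^{r} G(a,b,c;Sk_i;p_i^{n_i}).$$

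Next I apply the odd-prime case of Corollary 3.1 to each factor. The hypothesis $(a\DD,k) = 1$ forces $\alpha_{p_i} = 0$ and $\delta_{p_i} = 0$ at every prime $p_i \mid k$, so that $A \equiv a$ and $D \equiv \DD \pmod{p_i}$. Substituting $\alpha = \delta = 0$ into the odd-prime formula of Corollary 3.1 collapses all factors of $i$ and $\sqrt{p_i}$, leaving
$$G(a,b,c;Sk_i;p_i^{n_i}) = p_i^{n_i} \left(\frac{D}{p_i}\right)^{n_i}\left(\frac{-1}{p_i}\right)^{n_i} = p_i^{n_i} \left(\frac{-\DD}{p_i}\right)^{n_i},$$
which is independent of the residue $Sk_i$. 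Multiplying over $i$ and invoking the multiplicativity of the Jacobi symbol in its lower entry gives $G(a,b,c;S;k) = k \cdot \left(\frac{-\DD}{k}\right)$, as claimed.

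The main obstacle is the CRT factorization step, which is not explicitly recorded earlier in the paper for the binary form. The crucial point to check is the divisibility $k \mid k_i k_j$ for $i \neq j$, which kills the cross-terms $u_i u_j$, $u_i v_j$, and $v_i v_j$ in the exponent; one also needs to confirm $(Sk_i, p_i) = 1$, which follows from the coprimality of $S$ and $k$ (a convention implicit throughout the paper). After these checks, the remainder of the argument is a direct specialization of Corollary 3.1 at $\alpha = \delta = 0$.
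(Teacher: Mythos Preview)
Your proof is correct, but the paper takes a shorter and more direct route. Rather than factoring $k$ via the Chinese Remainder Theorem and then invoking Corollary~3.1 at each prime, the paper simply reruns the diagonalization argument of Theorem~3.1 with the composite odd modulus $k$ in place of $p^n$: since $(4a,k)=1$, the identity $4aQ=(2ax+by)^2+\DD y^2$ holds modulo $k$, and the substitution $x\mapsto 2ax+by$ is a bijection on $\zz/k\zz$, giving $G(a,b,c;S;k)=G(Sa;k)\cdot G(Sa\DD;k)$ immediately. Theorem~2.1, which is stated for arbitrary moduli, then finishes the computation in one line. Your CRT decomposition is perfectly valid and illustrates nicely how the prime-power formulas reassemble, but it introduces extra bookkeeping (the cross-term divisibility, the coprimality of $Sk_i$ with $p_i$, and the multiplicativity of the Jacobi symbol) that the paper avoids entirely by working globally from the start.
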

\begin{proof}
    Using the approach of Theorem 3.1, we obtain
    \begin{align*}
        G(a,b,c;S;k) &= G(Sa;k) \cdot G(Sa\DD;k) \\
  &= \left(\frac {Sa}k\right)\imath^{\left(\frac{k-1}2\right)^2} k^{\frac 12} \cdot \left(\frac {Sa\DD}k\right)\imath^{\left(\frac{k-1}2\right)^2} k^{\frac 12}\notag\\
        &= \left(\frac{-\DD}k\right) \cdot k,
    \end{align*}
which completes the proof.
\end{proof}

We plan to show in an upcoming paper how we may use our method to give an explicit evaluation of a quadratic form Gauss sum in $n$ variables.
We also plan on demonstrating how we may use our evaluation of the double Gauss sum  $G(a,b,c;S;p^n)$
to determine an explicit formula for  the number of solutions to the congruence $\ds ax^2+bxy+cy^2 \equiv k \imod {p^n}$ for a given integer $k$.

\section*{Acknowledgments}
The research of \c{S}aban Alaca was supported
by a  Discovery Grant from the Natural Sciences and Engineering Research Council of Canada (RGPIN-2015-05208).

\vspace{3mm}
\noindent
\c{S}aban Alaca and Greg Doyle \\
School of Mathematics and Statistics \\
Carleton University, Ottawa \\
Ontario, K1S 5B6, Canada \\

\noindent
SabanAlaca@cunet.carleton.ca\\
gdoyle@math.carleton.ca

\end{document}